\documentclass[preprint,12pt]{elsarticle}




\usepackage{graphicx}
\usepackage{epsfig}

\usepackage{amssymb}
\usepackage{amsthm}

\usepackage{lineno}

\usepackage{mathptmx}
\usepackage{latexsym}
\usepackage{paralist}
\usepackage{amsmath,amsbsy,amssymb,color} 
\usepackage{url}
\usepackage{subfigure}

\newdefinition{rmk}{Remark}
\newtheorem{prop}{Proposition}

\hyphenation{CO-NI-CET osci-lla-ting sta-bi-li-za-tion per-for-man-ce stu-died sta-bi-li-za-ting con-tro-lled 
fea-tu-re con-si-de-red equi-li-brium se-cond exis-ten-ce asyp-to-tic po-si-ti-ve a-pre-cia-ted a-fec-ting con-si-de-ra-tion 
me-thod a-ppre-cia-ted a-ffec-ting con-ti-nui-ty in-te-res-tin-gly n--di-men-sio-nal}

\journal{
}

\begin{document}

\begin{frontmatter}



\title{Oscillating delayed feedback control schemes for
stabilizing equilibrium points.}


\author{Ver\'onica E. Pastor \& Graciela A. Gonz\'alez}


\begin{abstract}
Limitations of the delayed feedback control and of its extended versions have been
fully treated in the literature. The oscillating delayed feedback
control appears as a promising scheme to overcome this
problem. In this work, two methods based on oscillatory delayed
feedback control schemes for the continuous time case are dealt
with. For both of them, stabilization of an equilibrium point in the
general non-linear scalar case is rigorously proven. Additionally,
their control performance and stability parameters region are
respectively studied.
\end{abstract}

\begin{keyword}
oscillating feedback control \sep delay \sep stability region \sep control performance \sep rate of convergence.
MSC[2010] 34H15 \sep 93D15 \sep 34D20
\end{keyword}

\end{frontmatter}


\section{Introduction}
\label{intro}

It is well known that delayed feedback control (DFC) was originally proposed
by Pyragas in \cite{Pyragas1} for stabilizing an unstable periodic orbit (UPO) in a chaotic system. 
Its most important feature is that it does not
require the exact location of the UPO to
be stabilized. It makes use of a control signal obtained from the
difference between the current state of the system and the state of
the system delayed by the period of the UPO. The DFC method is also reformulated as a tool to stabilize 
equilibria embedded in chaotic attractors (see \cite{survey16} and references within it).
With this objective, it is implemented on known chaotic systems as Chen system \cite{apChen} or
Rossler system \cite{apRossler} and on technical applications like \cite{Leit09} or \cite{Yang15} among others. An extended version (EDFC), proposed in \cite{Socolar}, results more effective for stabilizing highly-unstable equilibrium points and UPO's (\cite{Pyragas2}).\\
It is important to point out that not all UPO's  can be stabilized by time delayed feedback control methods.
Namely, for non-autonomous systems, it is not possible to stabilize a hyperbolic periodic orbit which has an 
odd number of real Floquet multipliers larger than unity. This is known as the odd number limitation (ONL) and it is
stated in \cite{Naka} for DFC and in \cite{Nakajima} for EDFC. The proofs of \cite{Naka} and \cite{Nakajima} do not apply to UPO's 
in the autonomous case (the technical reason is clearly explained in \cite{Hooton}).
Instead, there is a limitation and it also involves the number of Floquet multipliers greater than
unity but in addition, it depends on an analytical expression given by an integral of the control force along the UPO
to be stabilized. This limitation is proven in \cite{Hooton} for DFC and in \cite{Amman} for EDFC. For
equilibrium point stabilization, the ONL does hold true in both autonomous and non-autonomous systems. 
In particular, for the autonomous case, if the linearization matrix has an odd number of positive eigenvalues then 
stabilization is impossible by means of DFC methods (\cite{Pyragas2}, \cite{Kokame}). An interesting review on 
the evolution of the ONL problem and its derivations may be found in \cite{survey16}.\\
Another drawback of time delayed feedback is that the controlled system comes out a delayed 
differential equation, the state space of which is infinite dimensional and hence it is quite difficult
 to state analytical results and to get effective stabilization criteria. Some approaches focussed on overcoming these 
difficulties are based on periodic gain modulation (\cite{Leonov14}) or ``act-and-wait" concept introduced by Insperger (\cite{Insperger} and previous papers of this author). These methods are caracterized by alternaly applying and cutting off the controller in finite intervals yielding to a finite-sized monodry matrix of the closed system so the linear stability of the UPO may be enhanced by an appropiate choosing of the control parameters. Act-and-wait approach has been used together with DFC for stabilizing unstable equilibrium points (\cite{Konishi}), for stabilizing UPO's of nonautonomous systems (\cite{Pyragas4}) and, of autonomous systems (\cite{Pyragas5},\cite{Cetinkaya}).\\       
For stabilizing equilibrium points, a delayed feedback controller is derived in \cite{Kokame} that overcomes the drawbacks of DFC, providing a systematic procedure of its design. However this procedure is valid only for sufficiently short delay time which results inappropiate in certain experimental setting (i.e., in fast dynamical systems due to the finite operating speed of these electronic devices). Later, Konishi \textit{et.al.} (\cite{Konishi}) proposed a DFC based on the 
``act-and-wait" control, the advantage of it being that the controlled system with delay can be described by a discrete-time system without delay. This method works for long delay time and deadbeat controller may be designed by a simple systematic procedure but they can not show that their method overcomes the ONL property.\\
Interestingly, there is  an early contribution for improving the delayed feedback limitations (\cite{Schuster}). The key of
this strategy is to avoid a too rapid decay of the control magnitude
and this could be achieved by applying feedback control only
periodically. In the second part of the paper \cite{Schuster}, the idea of an
oscillating delayed feedback control is translated to the
differential equations. Different from the discrete time case (\cite{Schuster}, \cite{Morgul}), if the oscillating perturbation term involves the difference between current state and delay state, stabilization can not be achieved. Hence an oscillatory velocity
term is introduced in \cite{Schuster}: it is worked out for equilibrium point  stabilization of a scalar linear differential equation, with a rather uncompleted proof and as pointed out in \cite{Pyragas3}, the related stabilizing
result is not clear.\\
This issue is revisited in this work considering the general scalar non-linear case:
\begin{equation} \label{1}     
 \dot{x}=f(x)
\end{equation}
with $x^{*}$ being an unstable equilibrium point of (\ref{1}). Let
us assume that $f$ is continuously differentiable and $f'(x^{*})\!=\!\lambda\!>\!0$.\\
Two oscillating delayed feedback control (ODFC) schemes for equilibrium point
stabilization will be deeply studied. Preliminary ideas on them have been 
introduced by us in \cite{Dincon}. The first method is based on the delayed velocity term  taken from
\cite{Schuster}. In the second one, the perturbation depends on
the difference between two delayed states. In spite of being inspired in \cite{Schuster} both methods may be framed 
within the ``act-and-wait" concept. In fact as the resulting differential equations are affected by delayed feedback only periodically,
a continuously differentiable map is associated to the controlled dynamics and stability will be derived using linearization classical tools. 
Both algorithms will be fully presented and conditions for stabilization will be deduced. Under the stated conditions, the control objective achievement will be rigorously proven for the general nonlinear case. An analytical description of the stability parameters region will be given. Rate of convergence, control performance and stability parameters region of each method will be studied and confronted.

\section{ODFC method based on delayed velocity term}
\label{sec:2}

This control strategy consists in adding a perturbation based on a
delayed velocity term:
\begin{equation} \label{dos2}    \dot{x}(t)=f(x(t))+ \epsilon(t)  \dot{x}(t-\tau)
\end{equation}
where, \begin{equation*}
\epsilon(t)=\left\{\!\begin{array}{cll}
0 & \quad {\rm if} & \quad  2k\tau \leq t < (2k+1)\tau, \\
\epsilon & \quad {\rm if} &  (2k+1)\tau \leq t < (2k+2)\tau,
\end{array}\right . \ \ \ {\rm for} \ \  k \in \mathbb{N}\cup \{0\}
\end{equation*}
being $\epsilon$ and $\tau$ control design parameters.\\
Let us note that $x^{*}$ is preserved as an equilibrium point of
system (\ref{dos2}). Then, $\epsilon$ and $\tau$ for which the system
stabilizes in $x^{*}$ should be found.\\
Activated control depends on $\dot{x}(t-\tau)=f(\varphi(t-\tau))$,
being $\varphi$ the solution of (\ref{dos2}) in the previous
time-interval. Then, system (\ref{dos2}) becomes a non-autonomous
dynamical system described by a smooth piece-wise ordinary
differential equation.
It will be proved that for a certain range of $\epsilon$, depending
on $\lambda$ and $\tau$, $x^{*}$ results an asymptotically stable
equilibrium point. Therefore, if this strategy is applied with
initial condition in a neighborhood of the origin, the control
objective is fulfilled.\\
Putting $\delta x= x-x^{*}$, and $g(\delta x)=f(x^{*}+\delta x)$,
system (\ref{1}) yields to: $\delta \dot{x}=g(\delta x)$ with $g(0)=0$ 
and $g'(0)=\lambda$, while (\ref{dos2}) becomes:
$\delta \dot{x}=g(\delta x)+\epsilon (t) \delta
\dot{x}(t-\tau)$, so without lost of generality, we can assume $x^{*}=0$ and $f'(0)=\lambda$.\\

\begin{rmk}\label{R1}
 Fixed $\epsilon$ and $\tau$, system (\ref{dos2})
is determined by:\\
\begin{equation*}\begin{cases} \dot{x}=F(t,x)=F_k(x,t) \\
x(0)=x_{0} \end{cases}\end{equation*}\\
where for each $k\geq 0$  and 
for $t \in [2k\tau,(2k+2)\tau)$,
\begin{equation*} \label{dos-bis}
F_{k}(t,x)\!=\! \begin{cases} f(x)\! & \mbox{if } 2k\tau \! \leq \! t \!< \! (2k+1)\tau \\
f(x)\!+\!\epsilon \psi_{k}(t) \! & \! \mbox{if }\! (2k+1)\tau \! \leq \! t< \!(2k+2)\tau \end{cases} 
\end{equation*}
and $\psi_k(t)=  \dot{x}(t-\tau)$.\\
Let us note that: (i) $x \equiv 0 $ is solution in $[2k\tau,\!
(2k\!+\!2)\tau)$ of $\dot{x}=F_{k}(t,x) $ with $x(2k\tau)=0$, and (ii)
$\psi_{k}(t)=f(\varphi(t-\tau))$ being $\varphi$ the solution in the
sub-interval $[2k\tau, (2k+1)\tau)$, so $\psi_{k}(t)$ is continuous
on $[(2k+1)\tau, (2k+2)\tau)$. Then, by continuous dependence on
initial condition \cite{KHALIL}, given $\Delta_{k} > 0$, $\exists
\delta_{k} >0$ ($\delta_{k}=\delta_{k}(\epsilon,\tau)$) such that if
$|x_{2k}|< \delta_{k}$, there is a unique solution $x(t)$ of
$\dot{x}=F_{k}(t,x)$ with $x(2k\tau)=x_{2k}$ and $|x(t)|<
\Delta_{k}$ in $[2k\tau,(2k+2)\tau)$. Moreover, $x(t)$ is continuous
in \\$[2k\tau,(2k+2)\tau)$.\\
\end{rmk}

\begin{rmk}\label{R2} The solution of system (\ref{dos2}) on each
interval $[2k\tau,(2k\!+\!2)\!\tau\!)$ is only determined by the $x_{2k}$
value (but it does not depend on $k$). This is a consequence of the
fact that when the control is not active, the system is autonomous
and that when the control is active, it is non-autonomous but its
dependence on $t$ holds on the
solution of the first half of the interval.\\
\end{rmk}

\begin{prop} \label{prop1}
         Let $f \in C^{1}(\mathbb{R})$ with $f(x^{*})=0$ and $f'(x^{*})=\lambda > 0.$
If the parameters $\epsilon$ and $\tau$ verify:
     \begin{equation}\label{4}
         \frac{-2\cosh(\lambda\tau)}{\lambda\tau}<\epsilon<\frac{-2\sinh(\lambda\tau)}{\lambda\tau}.
     \end{equation}
then, $x^{*}$ is an asymptotically stable equilibrium point of the controlled system (\ref{dos2}).
\end{prop}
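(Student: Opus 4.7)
The plan is to exploit Remark~\ref{R2}, which says that the continuous trajectory on $[2k\tau,(2k+2)\tau)$ depends only on the sampled value $x_{2k}:=x(2k\tau)$. Thus I would introduce the stroboscopic (Poincar\'e-like) map $P:x_{2k}\mapsto x_{2(k+1)}$ and reduce asymptotic stability of $x^{*}=0$ for the continuous non-autonomous system~(\ref{dos2}) to asymptotic stability of $0$ as a fixed point of $P$. Remark~\ref{R1} guarantees that, for $x_{2k}$ sufficiently small, the solution exists on $[2k\tau,(2k+2)\tau]$ and depends continuously (in fact $C^{1}$, by standard smoothness of ODE flows with respect to initial data applied on each half-interval in turn) on $x_{2k}$; so $P$ is well defined and $C^{1}$ near $0$, and $P(0)=0$.

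The main technical step is to compute $P'(0)$ via the variational equation. On the first half-interval $[2k\tau,(2k+1)\tau]$ the control is off, the linearization is $\dot{y}=\lambda y$, and hence $y((2k+1)\tau)=e^{\lambda\tau}\,y_{0}$ and $\dot y(s)=\lambda e^{\lambda(s-2k\tau)}y_0$ for $s$ in this interval. On the second half the delay argument $t-\tau$ falls into the first half, so the linearized equation becomes
\begin{equation*}
\dot y(t)=\lambda y(t)+\epsilon\lambda\, y_{0}\, e^{\lambda(t-(2k+1)\tau)},\qquad t\in[(2k+1)\tau,(2k+2)\tau],
\end{equation*}
with initial datum $y((2k+1)\tau)=e^{\lambda\tau}y_{0}$. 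Solving this scalar linear ODE by an integrating factor (the forcing is resonant with the homogeneous solution and produces the typical $t\,e^{\lambda t}$ term) yields, at $t=(2k+2)\tau$,
\begin{equation*}
P'(0)=e^{2\lambda\tau}+\epsilon\,\lambda\tau\, e^{\lambda\tau}.
\end{equation*}

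Imposing $|P'(0)|<1$, subtracting $e^{2\lambda\tau}$ and dividing by the positive quantity $\lambda\tau e^{\lambda\tau}$, I recover precisely
\begin{equation*}
\frac{-e^{\lambda\tau}-e^{-\lambda\tau}}{\lambda\tau}<\epsilon<\frac{e^{-\lambda\tau}-e^{\lambda\tau}}{\lambda\tau},
\end{equation*}
which is condition~(\ref{4}). By the classical linearization principle for discrete dynamical systems, this makes $0$ asymptotically stable for $P$, so $x_{2k}\to 0$ along the sampling grid. To pass back to the continuous trajectory I would invoke Remark~\ref{R1}: given any $\Delta>0$, one picks $\delta$ so that $|x_{2k}|<\delta$ forces $|x(t)|<\Delta$ on $[2k\tau,(2k+2)\tau)$; combining this with $x_{2k}\to 0$ gives $x(t)\to 0$.

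The main obstacle, in my view, is not the linear ODE computation but the rigorous justification that $P$ is $C^{1}$ near $0$ and that its derivative is computed by the variational equation I wrote down. The delicate point is that the forcing on the second half involves $\dot x(t-\tau)=f(\varphi(t-\tau))$, where $\varphi$ is the uncontrolled trajectory; so differentiability of $P$ with respect to $x_{2k}$ is a composition of two $C^{1}$ flows, and one must check that differentiation and the piecewise structure commute at $t=(2k+1)\tau$. Once this is in place, the proof is essentially an exercise in solving a first-order linear ODE and manipulating hyperbolic functions.
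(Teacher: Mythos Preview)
Your proposal is correct and follows essentially the same route as the paper: define the stroboscopic map $P$ over one full period $2\tau$, compute $P'(0)=e^{2\lambda\tau}+\epsilon\lambda\tau e^{\lambda\tau}$ via the linearized (variational) dynamics on each half-interval, show $|P'(0)|<1$ is exactly condition~(\ref{4}), and then lift discrete asymptotic stability back to the continuous trajectory using Remark~\ref{R1}. The only cosmetic difference is that the paper factors $P=\widetilde{p}\circ p$ and computes $p'(0)=e^{\lambda\tau}$, $\widetilde{p}'(0)=e^{\lambda\tau}+\epsilon\lambda\tau$ separately before multiplying, whereas you carry the variational equation through both halves in one go.
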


\begin{proof}
 Let us assume $x^{*}=0.$\\
Let us fix $k\geq 0$ and take $|x_{2k}|$ small enough. As stated
in Remark \ref{R1}, there exists $x(t)$ unique continuous solution of
(\ref{dos2}) in $[2k\tau,(2k+2)\tau)$ with initial condition
$x(2k\tau)=x_{2k}$.\\
From Remark \ref{R2}, the map $P$ determined by 
$  x_{2k+2}=P(x_{2k})$
being
     \begin{equation} \label{3bis}
         x_{2k+2}=\displaystyle\lim_{t \to (2k+2)\tau^{-}} x(t)\,
     \end{equation}
is well defined. Map $P$ results from the composition of $p$ and
$\widetilde{p}$ given by: 
\begin{center}$p:  x_{2k+1}=x((2k+1)\tau)=p(x_{2k})$ \ \ \ $\widetilde{p}: x_{2k+2}=\widetilde{p}(x_{2k+1})$\end{center}
Note that $x^{*}=0$ is fixed point of $P$ and
$P'(0)=\widetilde{p'}(0)p'(0)$.\\
Let $\phi(t,x_{2k})$ the solution of $\dot{x}=f(x)$ with initial
condition $x(2k\tau)=x_{2k}$ in \\ $[2k\tau,(2k+1)\tau)$. This solution
satisfies:
     \begin{equation}
            \phi(t,x_{2k})= x_{2k}+\int\limits_{2k\tau}^t f(\phi(s,x_{2k}))ds.  
     \end{equation}
As $f$ is $\mathcal{C}^{1}$, by differentiation under the integral
sign, an expression of \\$\displaystyle\frac{\partial\phi}{\partial x_{2k}}((2k+1)\tau,x_{2k})$ is deduced and it results:
     \begin{equation}
            \frac{\partial\phi}{\partial x_{2k}}((2k+1)\tau,0)= e^{\lambda\tau}.
     \end{equation}
Since $p(x_{2k})=\phi((2k+1)\tau,x_{2k})$, then $p'(0)=e^{\lambda\tau}$.\\
Besides, being $\phi(t,x_{2k+1})$ the solution of
$\dot{x}=f(x)\!+\!\epsilon \dot{x}(t\!-\!\tau)$ in $[(\!2k\!+\!1)\tau,(\!2k\!+\!2\!)\tau\!)$ 
with initial condition $x(\!(2k\!+\!1)\tau)=x_{2k+1}$, it satisfies:
     \begin{equation}
            \phi(\!t,\!x_{2k\!+\!1})=
            x_{2k\!+\!1}+\int\limits^t_{(\!2k\!+\!1)\tau}\![f(\phi(\!s,x_{2k\!+\!1}))\!+\!\epsilon\dot{\phi}(\!s\!-\!\tau,x_{2k})]ds.
     \end{equation}
Analogously to the first part, $\displaystyle\frac{\partial\phi}{\partial x_{2k+1}}((2k+2)\tau,x_{2k+1})$ is obtained and
     \begin{equation}
            \frac{\partial \phi}{\partial x_{2k+1}}((2k+2)\tau,0)= e^{\lambda\tau}+\epsilon\lambda\tau.
     \end{equation}
As $\tilde{p}(x_{2k+1})=\phi((2k+2)\tau,x_{2k+1})$, then $\tilde{p}'(0)=e^{\lambda\tau}+\epsilon\lambda\tau.$
\\
Therefore, \vspace{-0.2cm}
     \begin{equation}\label{3}
         P'(0)=e^{\lambda\tau}(e^{\lambda\tau}+\epsilon\lambda\tau)
     \end{equation}
\vspace{-0.2cm} which is of modulus less than 1 iff $\epsilon$ and $\tau$ verify (\ref{4}).\\
\\
As $f$ is $\mathcal{C}^1$, the continuous differentiability
of solution $\phi(t,\cdot)$ on initial conditions is argued
(\cite{PERKO}). Then, $p$ and $\tilde{p}$ are $\mathcal{C}^1$, and
therefore, $P$ is $\mathcal{C}^1$, too.\\
Let us introduce $\alpha\!=\!P'(0).$ As $P(0)=0$, it follows from the mean value theorem that
$P(x)=P'(\delta)x$ for some $\delta$ between 0 and $x$. As $P'$ is continuous and $|P'(0)|=|\alpha|<1$,
fixed $\widetilde{\alpha}:|\alpha|<|\widetilde{\alpha}|<1$, $|P(x)|<|\widetilde{\alpha}||x|<|x|, \forall x: |x|<\widetilde{\delta}$, for $\widetilde{\delta}$ sufficiently small.\\
\smallskip
Hence, if $\epsilon$ and $\tau$ verify (\ref{4}) there exists
$\widetilde{\delta}$ such that if $\!|x_0|\!<\!\widetilde{\delta}$:
\vspace{-0.2cm}
\begin{equation}\label{eq} 
|x_{2k+2}|<|x_{2k}| \; \ \ \forall k \geq 0
\end{equation}
Let us fix $\Delta>0$ and $\delta=min\{\tilde{\delta}, \delta_0\}$
where $\delta_0$ is as in Remark 1 for $\Delta_0 = \Delta$. Then,
taking, $|x(0)|<\delta$, it results $|x_{2k}|<\delta$ for all
$k\geq 0$, that together with (\ref{3bis}) yields to the
existence of a unique (continuous) solution of (\ref{dos2}) for all
$t \geq 0$. Moreover, $|x(t)|< \Delta, \forall t \geq 0$
and, the stability of the origin is shown. In turn, $|P'(0)|<1$ also
yields to $\displaystyle\lim_{k \to\infty} x_{2k}=0$, which
implies $\displaystyle\lim_{k \to\infty} x_{2k+1}=0$ and it
results $\displaystyle\lim_{t \to\infty} x(t)=0$ so asymptotic stability is obtained.\\
\end{proof}
\begin{rmk}\label{R3}
For each $(\!\epsilon,\!\tau)$ verifying
(\ref{4}), there exists $\!\alpha\!\in\! (\!-\!1,\!1)$:
\vspace{-0.2cm}
\begin{equation} \label{eq11}  
 \epsilon = \frac{e^{-\lambda\tau}(\alpha-e^{2\lambda\tau})}{\lambda\tau}
\end{equation}
and viceversa. Indeed $\alpha=P'(0)$.
\end{rmk}
Particulary, if the function is linear, that is, $f(x)= \lambda x$,
the map $P$ is also linear, namely, $P(x)=\alpha x$ and the
incidence of $\alpha$ on the convergence speed is evident. For
example, let us take $f(x)=2x$: the trajectories resulting from applying (\ref{dos2}) with $\tau =
0.2$ and $x_{0}=0.5$, $\alpha=-0.4$ and $\alpha=0.8$ are confronted
in Figure \ref{fig:1}.

\begin{figure*}
 \includegraphics[width=16cm,height=8cm]{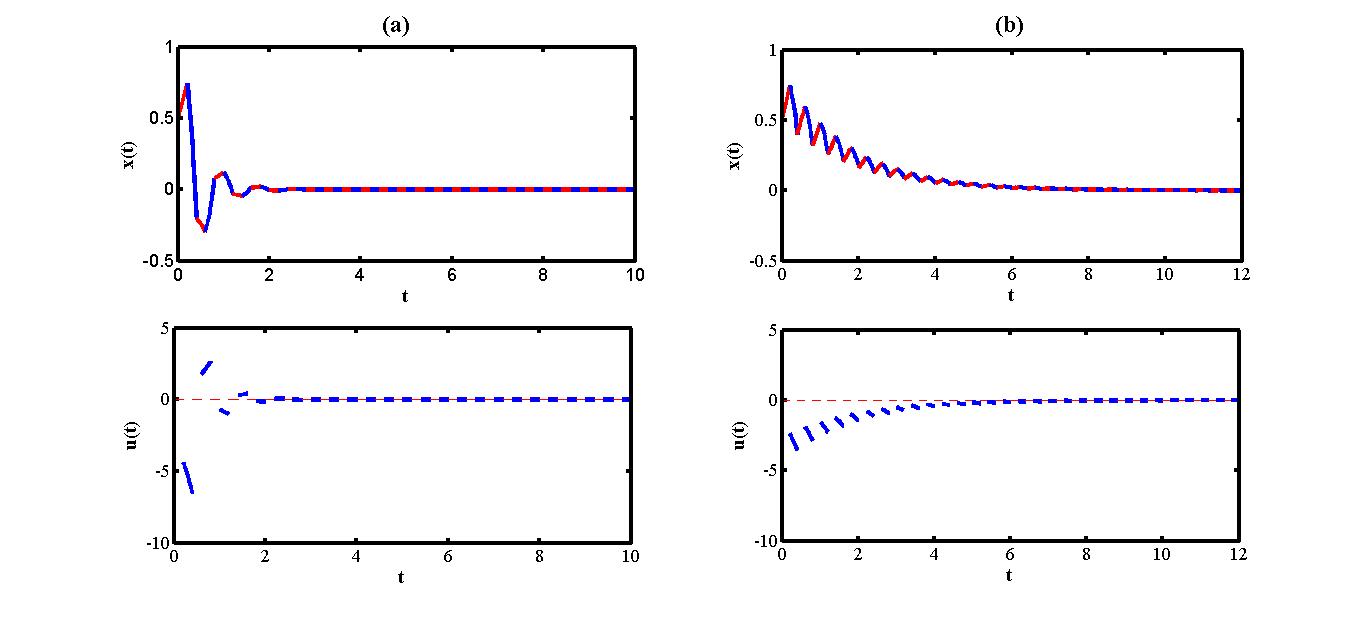}
\caption{State behavior and control performance of system
           (\ref{dos2}) with $f(x)=2x, x_{0}=0.5, \tau=0.2$ (a) $\alpha =-0.4$ (b) $\alpha =0.8$.}
\label{fig:1}       
\end{figure*}

On the other hand, the control parameter $\tau$ also affects the
convergence of the system trajectories: indeed as $\tau$ is smaller,
faster convergence comes out. Taking again $f(x)=2x$ and $\alpha=
-0.4, 0.8$, and changing $\tau$ by 0.4, speed of convergence is
slower (Figure \ref{fig:2}) than in the respective first examples (Figure
\ref{fig:1}).
\begin{figure*}
  \includegraphics[width=16cm,height=8cm]{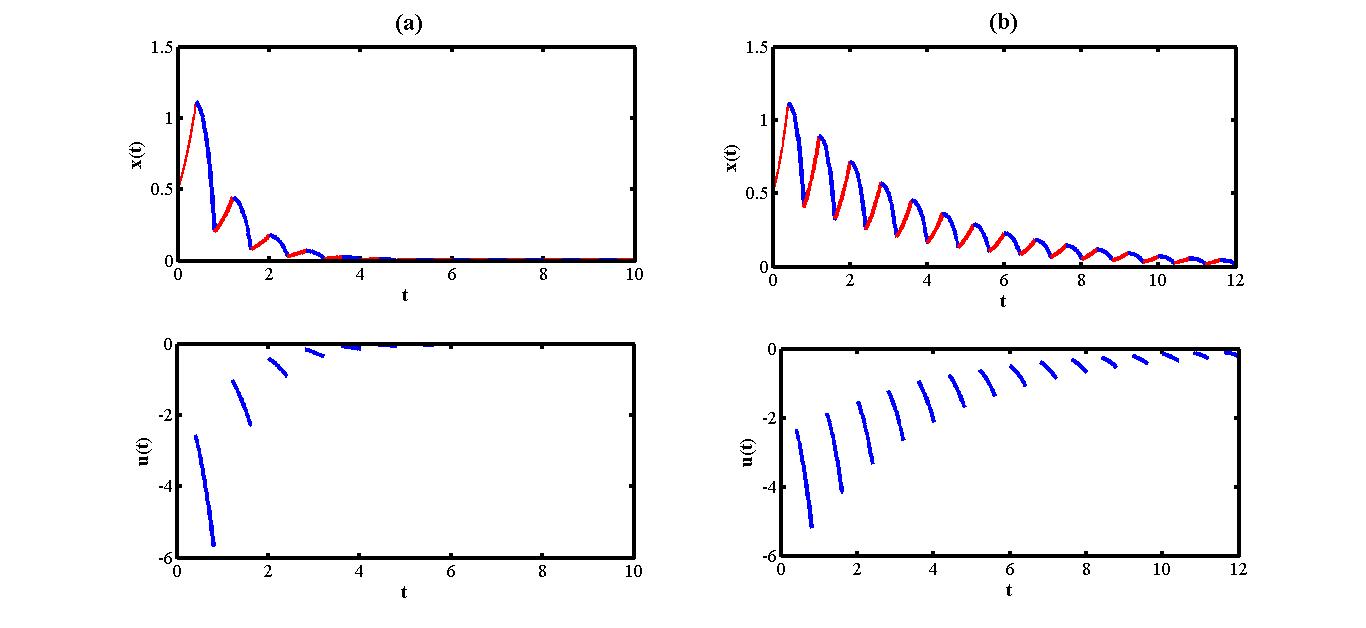}
\caption{State behavior and control performance of system (\ref{dos2}) with $f(x)=2x, x_{0}=0.5, \tau=0.4$. (a) $\alpha =-0.4$ (b) $\alpha =0.8$.}
\label{fig:2}       
\end{figure*}
\\
Due to the theoretical result, it is known that state signal convergence is achieved if the initial condition is taken near enough to the equilibrium point. For different nonlinear cases, simulations show how these effects on the state signal are inherited from the linearized system, although compensated by a rise in control magnitude.
This is illustrated for different nonlinear functions, with
$\alpha=-0.4$, $\tau=0.2$ and $x_{0}=0.5$ in Figure \ref{fig:3}
(confront to Figure \ref{fig:1}(a)).\\
\begin{figure*}
  \includegraphics[width=16cm,height=8cm]{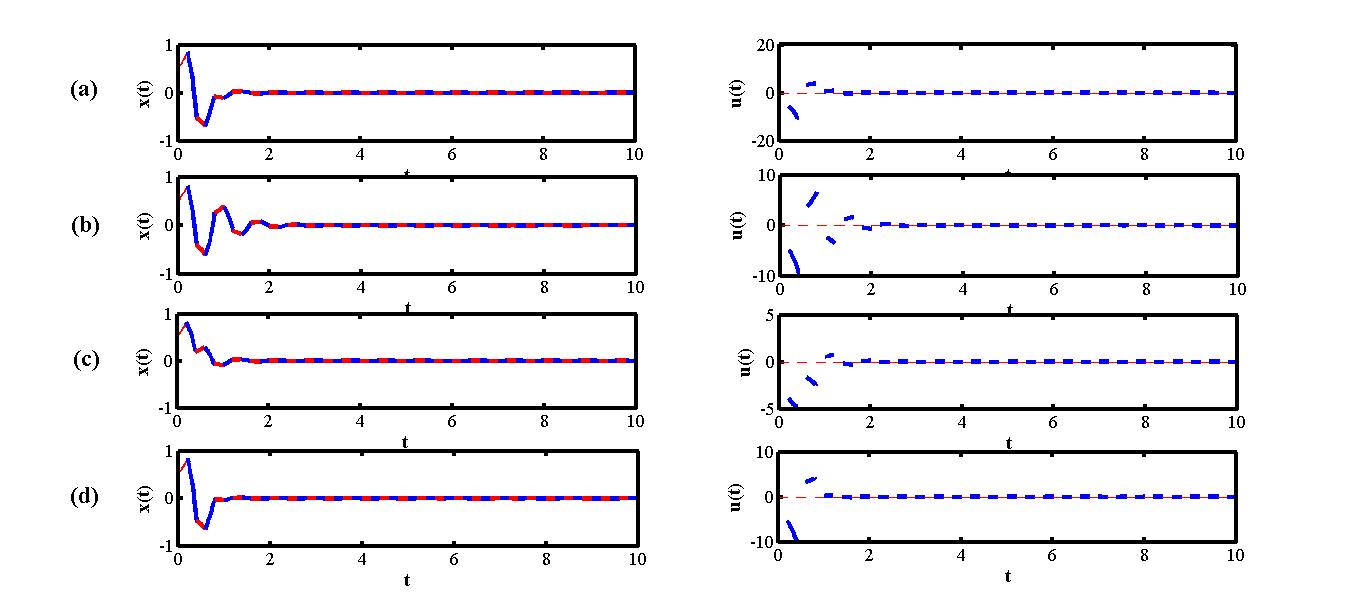}
\caption{State behavior and control performance of system (\ref{dos2}) for $x_{0}=0.5, \tau=0.2,
\alpha=-0.4$. (a) $f(x)=2x+x^{2}$, (b) $f(x)=2x+x^{3}$, (c) $f(x)=2x-x^{3}$, (d) $f(x)=2x+sin^{2}(x)$.}
\label{fig:3}       
\end{figure*}
\\
Signal exponential convergence is also revealed. The exponential decay curves envolving 
the signal as displayed in Figure \ref{fig:4} put  this feature even in more evidence. 
In fact, fixed $\epsilon$ and $\tau$, being $x(t)$ the solution of ($\ref{dos2}$) and $\alpha$ determined by ($\ref{eq11}$)
it is not difficult to prove that given a small $\mu > 0, \exists \delta_\mu$ such that if $|x_0| \leq \delta_\mu$:
\vspace{0.21cm}
\begin{equation} \label{eq12} 
 \begin{split}
& c_m e^{{\huge \frac{\ln(|\alpha|-\mu)}{2\tau}t}}|x_0| \leq |x(t)| \leq  c_M e^{\frac{\ln(|\alpha|+\mu)}{2\tau}t}|x_0|  \quad \text{if }  \alpha \neq  0 
 \end{split}
\end{equation}
and,
\begin{equation*}
 \begin{split}
& 0 \leq |x(t)| \leq  c_M e^{\frac{\ln \mu}{2\tau}t}|x_0|  \quad \text{if }  \alpha =  0
 \end{split}
\end{equation*}
for certain positive constants $c_m, c_M$. For linear systems, (\ref{eq12}) is also valid with $\mu=0$. Figure \ref{fig:4}(a) 
represents the upper inequality in one of these cases. This inequality may be verified even in the nonlinear case by taking $|x_0|$ small enough (Figure \ref{fig:4}(b)).\\
\begin{figure*}
  \includegraphics[width=16cm,height=7cm]{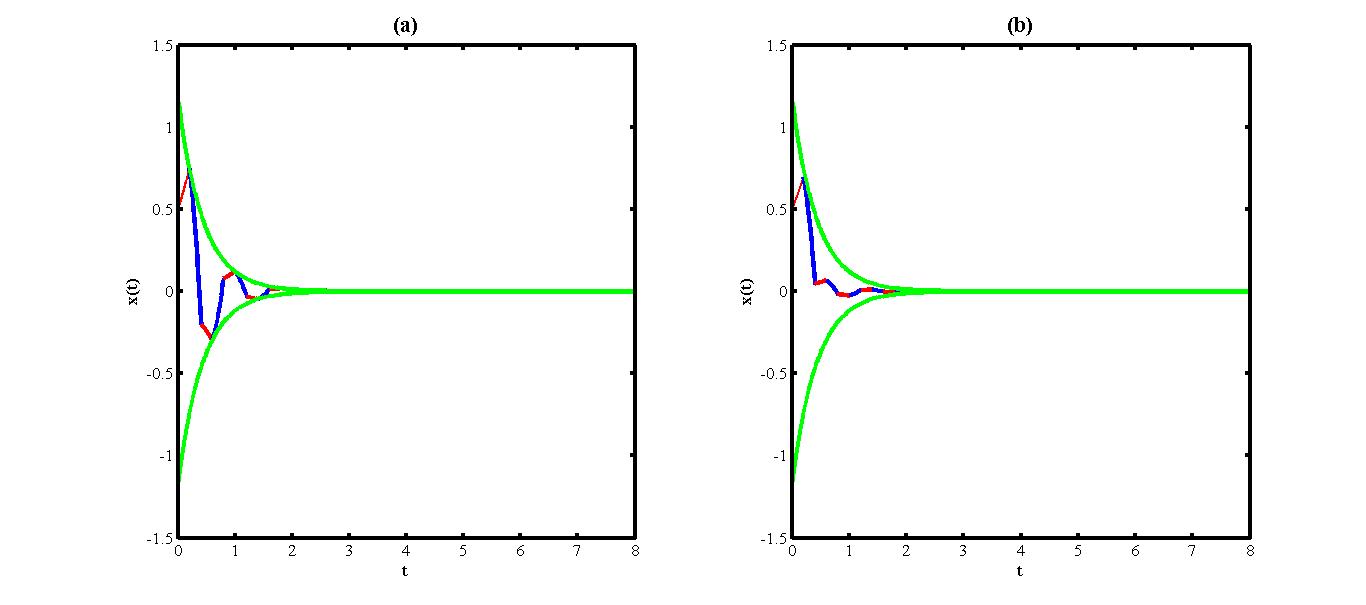}
\caption{Exponential stability for system (\ref{dos2}) with 
$ x_{0}=0.5, \tau=0.2$ and $\alpha =-0.4$: (a)  $f(x)=2x$; (b) $f(x)=2x-x^3$.}
\label{fig:4}       
\end{figure*}
\\
Hence, a convergence rate $\beta$ of algorithm (\ref{dos2}) may be stated as:
\begin{equation} \beta = \begin{cases} \frac{\ln|\alpha|}{2\tau}, \quad \text{if }  \alpha \neq  0 \\
-\infty, \quad \text{if }  \alpha =  0
\end{cases}
\end{equation}
\\
Although the rate of convergence is optimized
fixing $\alpha$ equal zero and $\tau$ as small as possible, choosing $\tau$ too small 
makes control magnitude take very large values during transitory. For example, influence of $\tau$-value on
trajectory behavior and on control cost resulting from applying
the method to $f(x)=2x$, $x_{0}=0.5$ and $\alpha=0$ with
$\tau=0.2$ and $\tau=0.02$ is illustrated in Figure \ref{fig:5}. Namely, the scale
change is fully appreciated by confronting control signal of Figure \ref{fig:5}(a) and Figure
\ref{fig:5}(b).\\
\begin{figure*}
  \includegraphics[width=16cm,height=8cm]{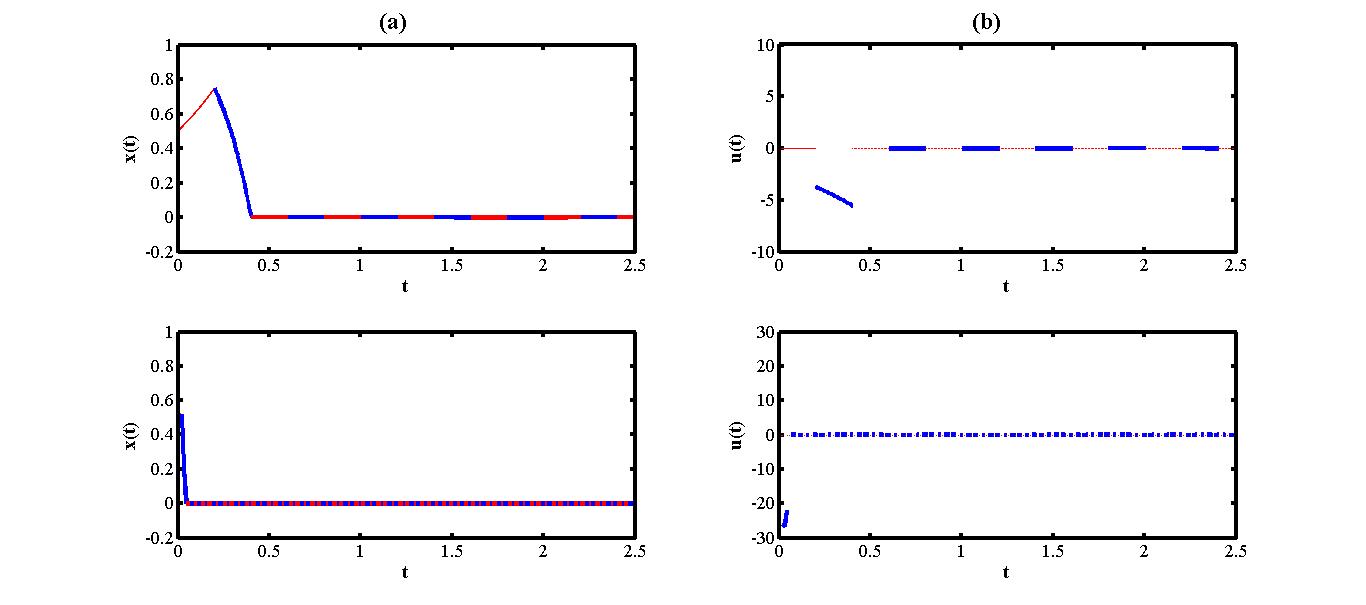}
\caption{State behavior and control performance of system (\ref{dos2}) with $f(x)=2x$,
           $x_{0}=0.5$, $\alpha= 0$ (a) $\tau=0.2$ (b) $\tau=0.02$.}
\label{fig:5}       
\end{figure*}
\\
This phenomena is better understood by paying attention to stability parameters region, i.e.
the region of the control parameter values for which the stability
objective is achieved. In Figure \ref{fig:6}(a), the stability
parameters region - which is obtained from (\ref{4}) - is
illustrated. The lower and upper bounds of $|\epsilon|$ are the curves defined by $\alpha = 1$ and $\alpha = -1$, respectively (Figure \ref{fig:6}(c)).
Note that if $\tau$ is near zero, for any $\alpha$,
there is a dramatic increase of $|\epsilon|$ (Figure
\ref{fig:6}(b)), so affecting the control performance. However, it is proved analytically that for a fixed $\alpha$, 
there exists a unique $\tau$ that minimizes the absolute value of the control gain; 
namely $\displaystyle\tau^{*}=\frac{1}{\lambda} \Big(1-\frac{2\alpha}{\alpha+ e^{2\lambda\tau^*}}\Big)$. Hence the choosing of adequate $\alpha$ and $\tau$ depends on
a compromise between rate of convergence and control magnitude.

\begin{figure*}
\includegraphics[width=16cm,height=8cm]{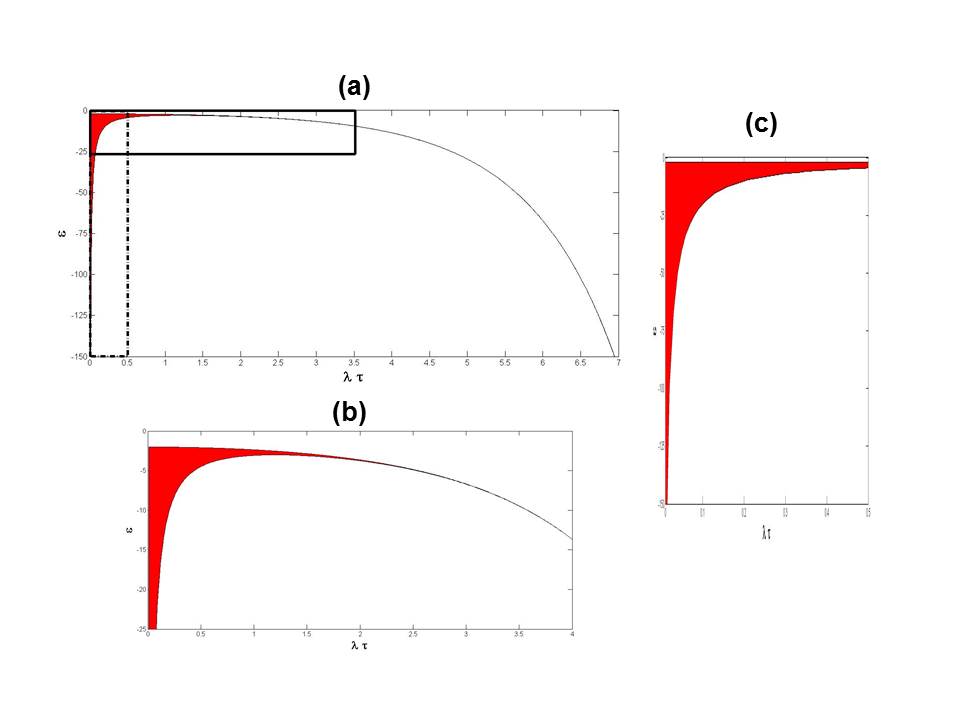}
\caption{(a) Stability parameters region of (\ref{dos2}). (b) Zoom in for  $-25 < \epsilon \leq  0$. (c) Zoom in for $0 < \lambda \tau \leq  0.5$.}
\label{fig:6}
\end{figure*}

\section{ODFC method based on delayed states difference}
\label{sec:3}

It is easy to verify in the scalar case, that if the oscillating perturbation involves the difference between current state and delayed state (and even for the generalized version as proposed in \cite{Konishi}) stabilization can not achieved by any control parameters. 
In this proposal, the difference between two delayed states is introduced into the perturbation:
\begin{equation}\label{7-3}   \dot{x}(t)=f(x(t))+
\epsilon(t)(x(t-2\tau)-x(t-\tau))
\end{equation}
where \begin{center} $\epsilon(t) = \begin{cases} 0, & \mbox{if } 3k\tau\! \leq \!t <\! (3k\!+\!2)\tau \\
\epsilon, & \mbox{if } (3k\!+\!2)\tau\! \leq \!t<\!(3k\!+\!3)\tau
\end{cases}$ \ \ \ for $k\! \in \mathbb{N}\! \cup \! \{\!0\}.$\end{center}
As in the first method, $x^{*}$ is preserved as an equilibrium point
and without lost of generality, we assume $x^{*}=0$. System (\ref{7-3})
also comes out a non-autonomous dynamical smooth piece-wise differential
equation and it is also possible to state a range of $\epsilon$,
depending on $\lambda$ and $\tau$ such that if  this strategy is
applied with initial condition in a neighborhood of the origin, the
control objective is fulfilled. The proof follows similar steps to
the stabilization proof of the first method.\\
\begin{prop} \label{prop2}
Let $f \in C^{1}(\mathbb{R})$ with $f(x^{*})=0$ and $f'(x^{*})=\lambda > 0.$
If the parameters $\epsilon$ and $\tau$ verify:
     \begin{equation}\label{pp2}
         \frac{e^{3\lambda\tau}-1}{\tau e^{\lambda\tau}(e^{\lambda\tau}-1)}<\epsilon< \frac{e^{3\lambda\tau}+1}{\tau e^{\lambda\tau}(e^{\lambda\tau}-1)}
     \end{equation}
then, $x^{*}$ is an asymptotically stable equilibrium point of the controlled system (\ref{7-3}).\end{prop}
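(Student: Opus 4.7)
My plan is to mirror the proof of Proposition \ref{prop1}, organizing the argument around the map $P:x_{3k}\mapsto x_{3k+3}$ determined by the solution of (\ref{7-3}) on each window $[3k\tau,(3k+3)\tau)$. First I would state analogs of Remarks \ref{R1} and \ref{R2} adapted to the three-subinterval structure (control off on $[3k\tau,(3k+1)\tau)$ and $[(3k+1)\tau,(3k+2)\tau)$, control on on $[(3k+2)\tau,(3k+3)\tau)$), so that for $|x_{3k}|$ sufficiently small, $P$ is well-defined, continuously differentiable in $x_{3k}$, and fixes $0$. Then I would decompose $P=p_3\circ p_2\circ p_1$, where $p_i$ sends $x_{3k+i-1}$ to $x_{3k+i}$, noting that $p_3$ implicitly depends on $x_{3k}$ and $x_{3k+1}$ through the delayed arguments, but that these are recovered from $x_{3k+2}$ via the invertible autonomous flow $\dot x=f(x)$ near $x^{*}$.

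Since the first two subintervals are governed by the autonomous equation, the same $C^1$ dependence on initial data used in Proposition \ref{prop1} immediately gives $p_1'(0)=p_2'(0)=e^{\lambda\tau}$. The key step is to compute $p_3'(0)$. On $[(3k+2)\tau,(3k+3)\tau)$, the integral form of the solution is
\begin{equation*}
\phi(t,x_{3k+2})=x_{3k+2}+\int_{(3k+2)\tau}^{t}\bigl[f(\phi(s,x_{3k+2}))+\epsilon(\phi_1(s-2\tau,x_{3k})-\phi_2(s-\tau,x_{3k+1}))\bigr]ds,
\end{equation*}
where $\phi_1,\phi_2$ are the autonomous solutions on the two earlier subintervals. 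Differentiating under the integral sign and evaluating at $0$ produces a linear variational equation $\dot u=\lambda u+\epsilon\bigl(e^{\lambda(s-2\tau)}-e^{\lambda(s-\tau)}\bigr)$ on $s\in[2\tau,3\tau)$ with $u(2\tau)=e^{2\lambda\tau}$. Because the forcing is resonant with the homogeneous solution $e^{\lambda s}$, the particular solution carries a factor $s\,e^{\lambda s}$, and after assembling contributions I expect
\begin{equation*}
P'(0)=e^{3\lambda\tau}-\tau\,\epsilon\,e^{\lambda\tau}(e^{\lambda\tau}-1).
\end{equation*}
Imposing $|P'(0)|<1$ and using $e^{\lambda\tau}(e^{\lambda\tau}-1)>0$ to divide without flipping inequalities yields exactly (\ref{pp2}).

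With the spectral estimate $|P'(0)|<1$ secured, the conclusion follows verbatim from the last paragraph of Proposition \ref{prop1}'s proof: by continuity of $P'$ and the mean value theorem, pick $\widetilde\alpha$ with $|P'(0)|<|\widetilde\alpha|<1$ and $\widetilde\delta>0$ such that $|P(x)|<|\widetilde\alpha||x|$ for $|x|<\widetilde\delta$; iterating gives $|x_{3k}|\to 0$ geometrically, continuous dependence on initial data on each subinterval then forces $x_{3k+1},x_{3k+2}\to 0$ and extends the solution continuously to all $t\ge 0$, and therefore $x(t)\to 0$.

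The main obstacle I anticipate is the variational calculation on the third subinterval: unlike the delayed-velocity case, here the forcing involves two distinct delayed states and one of the resulting exponentials is resonant with the homogeneous solution, producing the $\tau e^{\lambda\tau}(e^{\lambda\tau}-1)$ factor that is responsible for the specific shape of (\ref{pp2}). Everything else is a structural mirror of the previous proof; the delicate bookkeeping lies in correctly propagating the linearization through three subintervals and in justifying that $p_3$ can be legitimately treated as a function of $x_{3k+2}$ alone when computing $P'(0)$.
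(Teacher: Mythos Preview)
Your proposal is correct and follows essentially the same route as the paper: the paper also reduces to the discrete map $P:x_{3k}\mapsto x_{3k+3}$, computes $P'(0)=e^{3\lambda\tau}\bigl[1+\epsilon\tau(1-e^{\lambda\tau})e^{-2\lambda\tau}\bigr]$ (which coincides with your $e^{3\lambda\tau}-\tau\epsilon\,e^{\lambda\tau}(e^{\lambda\tau}-1)$), and then invokes the contraction and continuous-dependence argument from Proposition~\ref{prop1} verbatim. The only cosmetic difference is that the paper groups your $p_1,p_2$ into a single map $p$ with $p'(0)=e^{2\lambda\tau}$ rather than splitting into three pieces; incidentally, both forcing exponentials in your variational equation are resonant with $e^{\lambda s}$, not just one, which is why the full factor $\tau e^{\lambda\tau}(e^{\lambda\tau}-1)$ emerges.
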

\begin{proof}
Let us assume $x^{*}=0$. Existence, unicity and continuity of the solutions in $[3k\tau,(3k+3)\tau)$ for all
$k\geq 0$, result as in Proposition 1.\\
Here, the map $P$ defined by $x_{3k+3}=\displaystyle\lim_{t \to (3k+3)\tau^{-}} x(t)\,=P(x_{3k})$ for $k\geq
0$, has $x^{*}=0$ as fixed point and $P'(0)=\widetilde{p}'(0)p'(0)$ with: 
\begin{equation*} p: x_{3k+2}=x((3k+2)\tau)=p(x_{3k})\end{equation*}
and, \begin{equation*}\widetilde{p}:x_{3k+3}=\widetilde{p}(x_{3k+2}).\end{equation*}
Let $\phi(t,x_{3k})$  the solution of (\ref{7-3}) in $[3k,(3k+2)\tau)$  with initial condition\\ $x((3k)\tau)=x_{3k}$.
By using the integral formulation, as $f$ is $C^1$, it results:
     \begin{equation*}
         p'(0)=\frac{\partial\phi}{\partial x_3k}((3k+2)\tau,0)=e^{2\lambda\tau}
     \end{equation*}
Idem, for $\phi(t,x_{3k+2})$, the solution of (\ref{7-3}) with initial condition\\ $x((3k+2)\tau)=x_{3k+2}$, it is obtained:
     \begin{equation*}\label{8-0}
         \widetilde{p}(0)=\frac{\partial\phi}{\partial x_3k+2}((3k+3)\tau,0)=e^{\lambda\tau}+\epsilon\tau(1-e^{\lambda\tau})e^{-\lambda\tau}.
     \end{equation*}
Therefore, 
     \begin{equation}\label{8}
         P'(0)=e^{3\lambda\tau}[1+\epsilon\tau(1-e^{\lambda\tau})e^{-2\lambda\tau}]
     \end{equation}
which is of modulus less than 1 iff $\epsilon$ and $\tau$ verify (\ref{pp2}).\\
As in Proposition 1, it is shown that if $\epsilon$ and $\tau$
verify (\ref{prop2}); there exists $\widetilde{\delta}$ such that if $|x_0| < \widetilde{\delta}:$
\begin{equation}\label{eq2} |x_{3k+3}|<|x_{3k}| \; \ \ \forall k \geq 0.
\end{equation}
 The existence of a unique continuous solution of (\ref{7-3}) for all
$t\geq 0$ is stated by following the same
technical considerations as in Proposition 1. In turn, this yields to the asymptotic stability of the origin.
\end{proof}
%
\begin{rmk}\label{R4}
 As for the first method, introducing $\alpha=P'(0) \in (-1,1) $
the relationship (\ref{pp2}) may be formulated through:\\
\begin{equation} \label{eq18}   \epsilon =  \frac{e^{-\lambda\tau}(e^{3\lambda\tau}-\alpha)}{\tau(e^{\lambda\tau}-1)}
\end{equation}
for $\alpha$: $|\alpha|<1$.
\end{rmk}
Comments about the control performance of this method are quite
similar to the ones on the first method. For illustration see
Figures \ref{fig:7}, \ref{fig:8}, \ref{fig:9} and \ref{fig:10}.
\begin{figure*}
  \includegraphics[width=16cm,height=8cm]{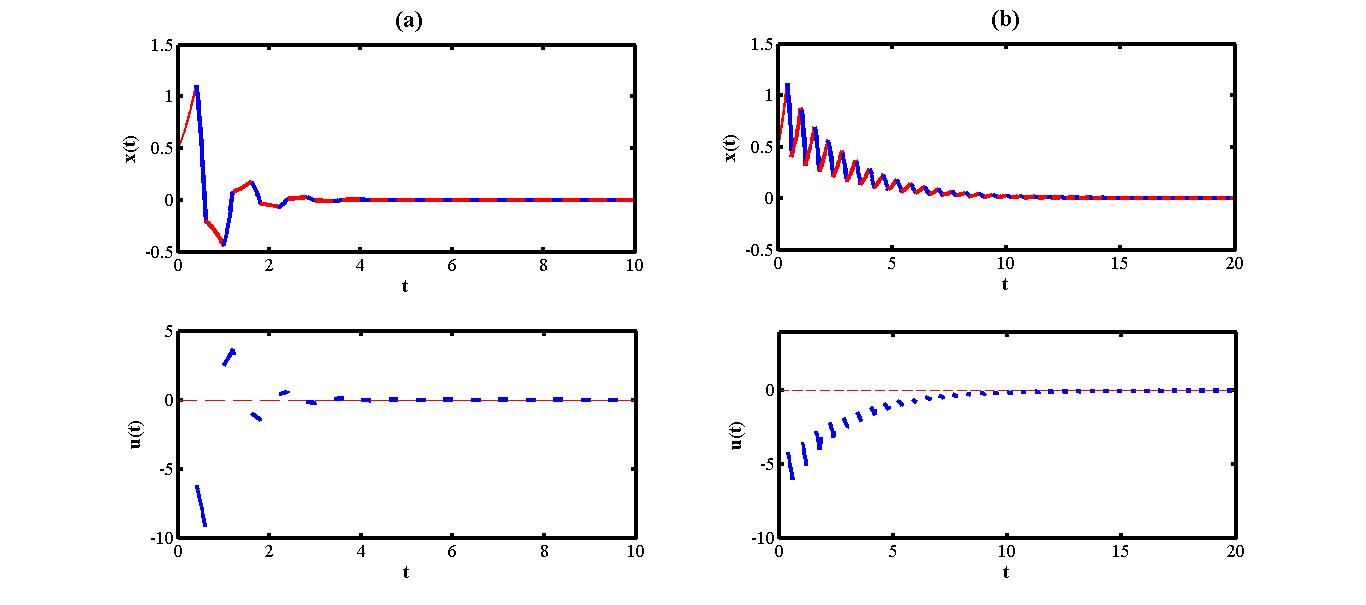}
\caption{State behavior and control performance of system
           (\ref{7-3}) with $f(x)=2x, x_{0}=0.5, \tau=0.2$ (a) $\alpha =-0.4$ (b) $\alpha =0.8$.}
\label{fig:7}       
\end{figure*}
\\
\begin{figure*}
  \includegraphics[width=16cm,height=8cm]{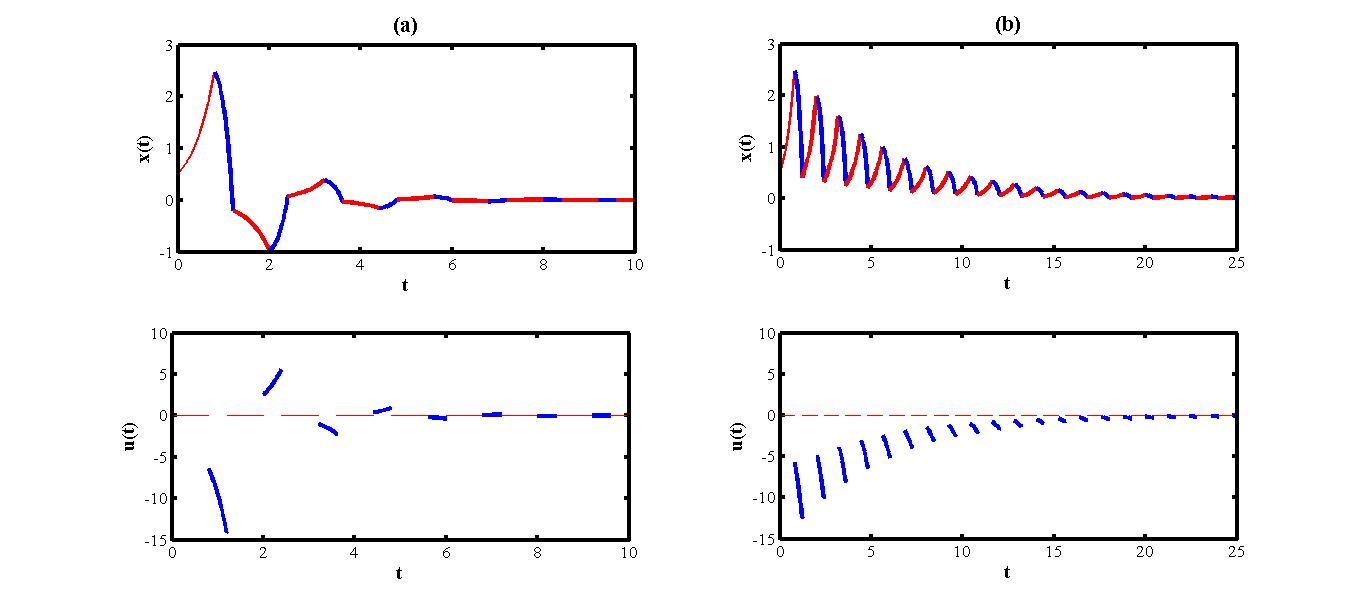}
\caption{State behavior and control performance of system (\ref{7-3}) with $f(x)=2x, x_{0}=0.5, \tau=0.4$. (a) $\alpha =-0.4$ (b) $\alpha =0.8$.}
\label{fig:8}       
\end{figure*}
\\
\begin{figure*}
  \includegraphics[width=16cm,height=8cm]{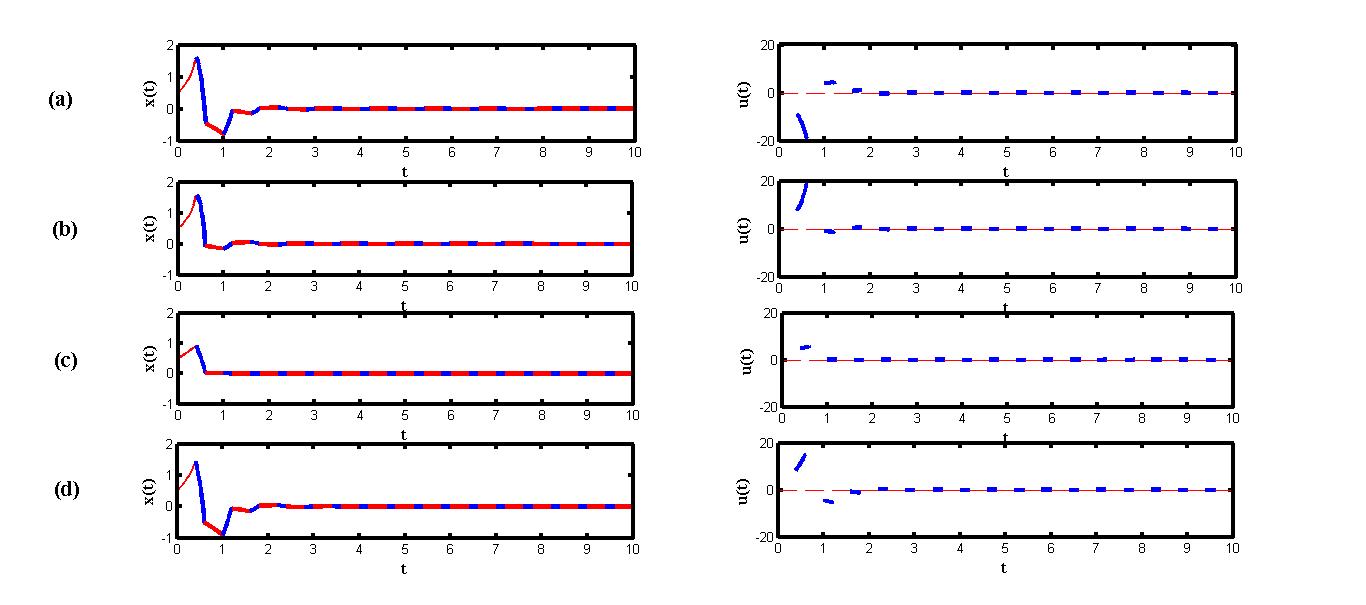}
\caption{State behavior and control
performance of system (\ref{7-3}) for $x_{0}=0.5, \tau=0.2, \alpha=-0.4$. (a)$f(x)=2x+x^{2}$, (b)$f(x)=2x+x^{3}$,
(c)$f(x)=2x-x^{3}$, (d)$f(x)=2x+sin^{2}(x)$.}
\label{fig:9}       
\end{figure*}
\\
\begin{figure*}
  \includegraphics[width=16cm,height=7cm]{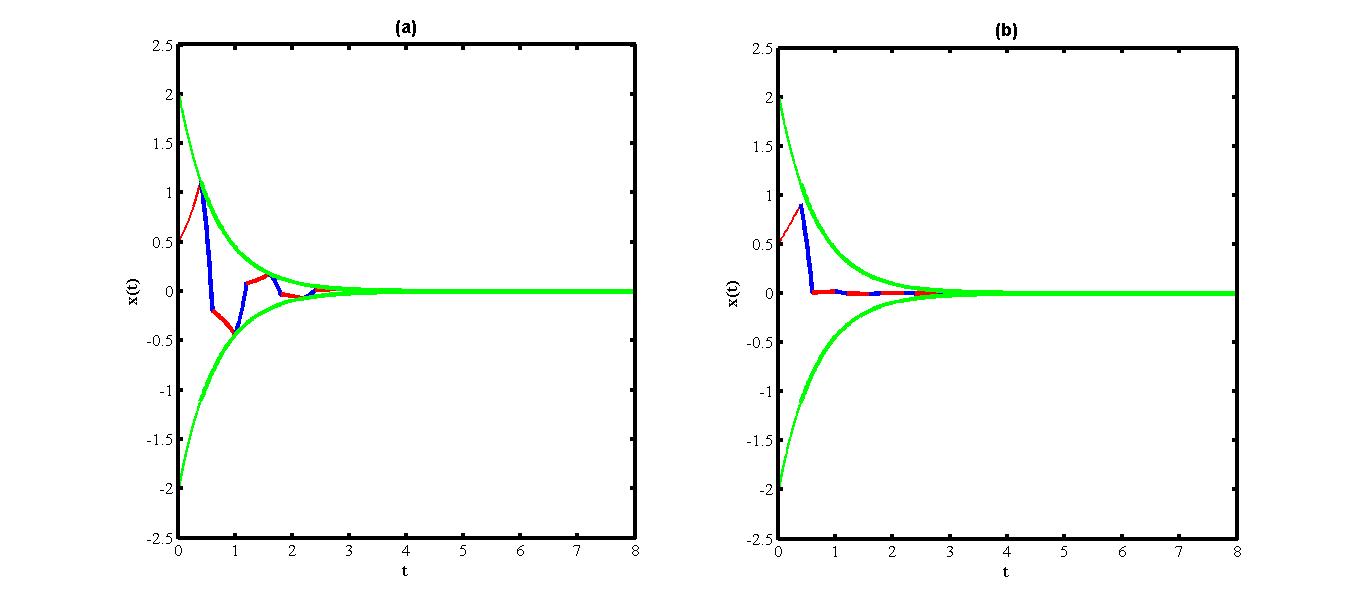}
\caption{Exponential stability for system (\ref{7-3}) with 
$ x_{0}=0.5, \tau=0.2$ and $\alpha =-0.4$: (a)  $f(x)=2x$; (b) $f(x)=2x-x^3$.}
\label{fig:10}       
\end{figure*}
The exponential decayment is also valid in this case:
\begin{equation*}
 \begin{split}
& c_m e^{\frac{\ln(|\alpha|-\mu)}{3\tau}t} |x_0| \leq |x(t)| \leq c_M e^{\frac{\ln(|\alpha|+\mu)}{3\tau}t} |x_0| \quad \text{if }  \alpha \neq 0 \\
\text{and,}\\
& 0 \leq |x(t)| \leq c_M e^{\frac{\ln \mu}{3\tau}t} |x_0| \quad \text{if }  \alpha =  0  
 \end{split}
\end{equation*}\\
for certain positive constants $c_m$, $c_M$.\\
And the convergence rate $\beta$ of algorithm (\ref{7-3}) comes out:\\
\begin{equation*}  \begin{split}
\beta = \begin{cases} \frac{\ln|\alpha|}{3\tau}, & \mbox{if }  \alpha \neq  0 \\
-\infty, & \mbox{if }  \alpha =  0 \end{cases}
 \end{split}
\end{equation*}
Equation (\ref{pp2}) states the stability parameters region of this method. Graphically, it is displayed in Figure \ref{fig:11}.
Consideration about the choosing of the design control parameters are similar to the ones of the first introduced method. In particular, it is convenient to choose $\tau$ near $\tau ^*$, the minimazing value of $\epsilon/ \lambda$, which for a fixed $\alpha$, is given by: 
\begin{equation*} \tau^{*}= \frac{1}{\lambda} \left( \frac{(e^{2 \lambda \tau^{*}} - \alpha e^{-\lambda \tau^{*}})(e^{\lambda \tau^{*}}-1)}{e^{3 \lambda \tau^{*}} - 2 e^{2 \lambda\tau^{*}} - \alpha e^{-\lambda\tau^{*}}+ 2 \alpha} \right) \end{equation*}
%
\begin{figure*}
\includegraphics[width=16cm,height=8cm]{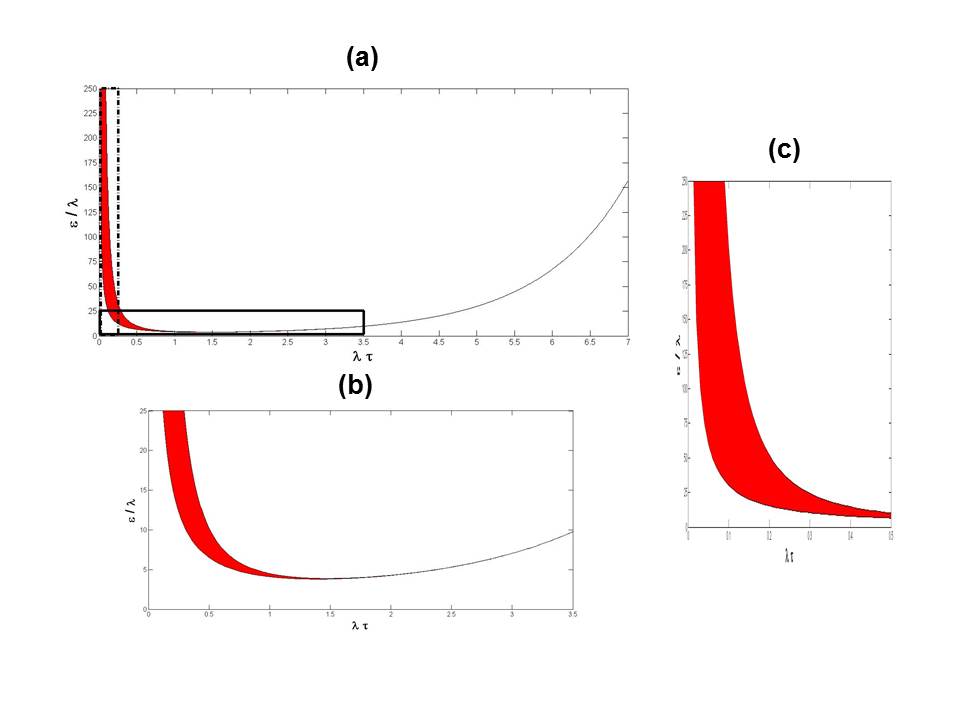}
\caption{(a) Stability parameters region of (\ref{7-3}). (b) Zoom in for  $0 < \epsilon/\lambda \leq  25$. (c) Zoom in for $0 < \lambda \tau \leq  0.5$.}
\label{fig:11}       
\end{figure*}

\section{Concluding remarks and future research}

Two  methods based on ODFC schemes for the continuous time case has been dealt with.
The first one coincidences with the proposal of \cite{Schuster}, based on a delayed velocity term, but extended to the general nonlinear case. For the second method,  the unsuccessful perturbation that depends on only one delayed state, is replaced by one involving two-delayed states. The methodology developed to prove the achievements of the first strategy has been straightforward transferred to prove analogous features on the second one. Hence, for both of them, local stabilization of an equilibrium point in the general non-linear scalar  case has been rigorously proven. The key ingredient of this proof is the bulding of a discrete-time map which reflects the dynamics of the controlled system.
Let us emphasize that the controlled system is a discontinuous time-delayed system but the associated discrete-time system is described by a $C^1$ map so stability is obtained from its linearization which can be computed for any nonlinear system. Then, from continuous dependence on initial conditions, the stabilization of the continuous time system comes out. Additionally, the stability parameter region is explicitly described and in particular, the adequate parameters for deadbeat control  ($\alpha = 0$)
are easily obtained. \\
A wide simulation work let us claim that the first method displays better control performance features than the second one.
This may even be appreciated by confronting the few examples of Section 2 with the respective examples of Section 3. 
Namely, from obtaining the exponential bound of the solution, a quantification for the rate of convergence was stated. 
This index of convergence and a detailed analysis of the stability parameters region confirm the claimed conjectures.\\
These strategies may be developed to stabilize equilibrium points in the n-dimen-\\sional case under adequated observability and controlability conditions without presenting the restrictions of the DFC methods studied in \cite{Kokame} and \cite{Leonov14-2}. More interestingly a right extension of our second method appears as a candidate for overcoming the ONL, coming out as an alternative of \cite{Konishi} in which there is also an ``on-off switching" feedback gain but it does not work in the one dimensional case.\\
As the second method avoids the computation of the derivative, its numerical implementation may result more efficient just because it is not desirable to produce derivative signal $\dot{x}(t)$ from noisy measurements of $x(t)$.
Namely, its extension to the stabilization of UPO is quite simple. Suppose that $\widetilde{x}(t)$ is a UPO
 and its period $T$ is known. By introducing $\delta x=x-\widetilde{x}(t)$, the oscillating feedback control based on delayed states becomes:\begin{equation*}
         u(\!t)\!=\!K(\!t)[\delta x(\!t\!-\!2T)\!-\!\delta x(\!t\!-\!T)]\!=\! K(\!t)[x(\!t\!-\!2T)\!-\!x(\!t\!-\!T)]
\end{equation*}
being $K(\!t\!)$ the oscillating control gain. As in Pyragas method, it
does not require the exact location of the UPO to be stabilized. So stated, it appears as an alternative to the proposals in \cite{Leonov14}, \cite{Pyragas5} and \cite{Cetinkaya}. The problem of UPO stabilization yields to the problem of stabilizing the origin in the non-autonomous n-dimensional case. Note that the kind of periodicity that define $K(t)$ is quite similar to the switch on and off of the ``act-and-wait-time-delayed" feedback control used in these works so the extension of our scheme to UPO stabilization could contribute to advance on these issues. This problem, and additionally, its application for controlling chaos,
i.e., for the UPO embedded in a strange attractor, is part of our
future research. 

\section{Acknowledgements}

This work was supported by UBACyT 2014-2017 (20020130200093 BA GEF).
 


\end{document}